\newcommand{\showcomments}{yes}
\renewcommand{\showcomments}{no}
\newsavebox{\commentbox}
\newtheorem{thm}{Theorem}
\newtheorem{lem}[thm]{Lemma}
\newtheorem{cor}[thm]{Corollary}
\newtheorem{prop}[thm]{Proposition}
\theoremstyle{definition}
\newtheorem{defn}[thm]{Definition}
\newtheorem*{rk}{Remark}
\newtheorem{exmp}[thm]{Examples}
\newtheorem{claim}[thm]{Claim}
\newtheorem{claim*}{Claim}
\DeclareMathOperator{\Aut}{Aut}
\DeclareMathOperator{\stabilizer}{Stab}
\newcommand{\Z}{\ensuremath{\mathbb{Z}}}
\newcommand{\R}{\ensuremath{\mathbb{R}}}
\newcommand{\Rmnum}[1]{\mathbf{{\expandafter\@slowromancap\romannumeral #1@}}}
\let\oldmarginpar\marginpar
\renewcommand\marginpar[1]{\-\oldmarginpar[\raggedleft\footnotesize #1]%
{\raggedright\footnotesize #1}}
\newcounter{enumitemp}
\newcommand{\N}{\ensuremath{\mathbb{N}}}
\begin{document}
\title{Groups with near exponential residual finiteness growth}
\author[K. Bou-Rabee]{Khalid bou-Rabee}
\thanks{K.B. supported in part by NSF grant DMS-1405609, A.M. supported by Swiss NSF grant 200021\_144323 and P2GEP2\_162064.}
\address{School of Mathematics, CCNY CUNY, New York City, New York, USA}
\email{khalid.math@gmail.com}
\author[A. Myropolska]{Aglaia Myropolska}
\address{Laboratoire de Math\'ematiques, 
Universit\'e Paris-Sud 11, Orsay, France}
\email{aglaia.myropolska@math.u-psud.fr}

\date{\today}

\begin{abstract}
A function $\N \to \N$ is \emph{near exponential} if it is bounded above and below by functions of the form $2^{n^c}$ for some $c > 0$.
In this article we develop tools to recognize the near exponential residual finiteness growth  in groups acting on rooted trees. In particular, we show the near exponential residual finiteness growth for certain branch groups, including the first Grigorchuk group, the family of Gupta-Sidki groups and their variations, and Fabrykowski-Gupta groups. We also show that the family of Gupta-Sidki p-groups, for $p\geq 5$, have super-exponential residual finiteness growths.
\end{abstract}

\subjclass[2010]{Primary: 20E26; Secondary: 20F65, 20E08}
\keywords{residual finiteness growth, residually finite, branch groups}
\maketitle
\setcounter{tocdepth}{2}

\tableofcontents

\section{Introduction}\label{sec:introduction}

The notion of residual finiteness growth (depth function) measures how efficiently finite groups approximate a given group. In this article, we begin a stratification of a well-known class of non-linear groups via residual finiteness growths. 
This class consists of groups admitting a ``nice'' action on a $d$-regular rooted tree\footnote{That is, a tree with the distinguished vertex $\emptyset$ of degree $d$ and all other vertices of degree $d+1$.}. This is the class of \emph{branch groups}: groups admitting a lattice of subnormal subgroups with the branching structure following the structure of the tree on which the group acts. 
The class of branch groups, defined in \cite{MR0274575} and \cite{MR1765119}, is one of three classes that 
partition the class of all just-infinite groups, that is infinite groups all of whose proper quotients are finite. Furthermore, the class of branch groups contains many examples of groups with remarkable algebraic properties.
One of them is the \emph{first Grigorchuk group} \cite{MR565099}, $\Gamma$, that comes equipped with a natural embedding into the automorphism group of a rooted binary tree, $T_2$. This group is far from being linear: it is a just-infinite 2-group, it is commensurable with $\Gamma \times \Gamma$, and has intermediate word growth (see \cite{MR2195454} for a survey on $\Gamma$). 
Moreover, the group $\Gamma$ has exponential depth function \cite{B09}. 
Before we state our results, we recall the definition of some residual finiteness growth functions.

Let $G$ be a finitely generated residually finite group. 
The \emph{depth function of an element} $g\in G \setminus \{1 \}$ is defined as follows
$$
D_G(g) = \min \{ |G: N|, \: N\lhd G \text{ and } g\notin N \}.
$$
For a fixed finite generating set $S$ of $G$ and $g\in G$ denote by $||g||_S$ the word length of $g$ with respect to $S$. 
Define the \emph{residual finiteness growth} as
$$
F_G^S(n)=\max_{g\in G\setminus \{1\} :\; ||g||_S\leq n} D_G(g).
$$

Let $B_G^S(n)=\{g\in G\mid ||g||_S\leq n\}$ be the word metric $n$-ball. Define the \emph{full residual finiteness growth} $\Phi_G^S$ as
$$
\Phi_G^S(n)=\min \{ |Q| : B_G^S(n) \text{ injects into } Q \text{ through an epimorphism } \phi: G \to Q  \}.
$$
Clearly, one has $F_G^S(n)\leq \Phi_G^S(n)$. 

For two functions $f, g \colon \R \rightarrow \N$ we write $f \preceq g$ if there exists $C > 0$ such that $f(n) \leq g(Cn)$. 
We say that $f$ and $g$ are equivalent ($f \approx g$) if $f \preceq g$ and $g \preceq f$. 
It follows from \cite[Lemma 1.1]{B09} and \cite[Lemma 1.1]{BS14} that for two finite generating sets $S_1$ and $S_2$ of $G$, the following equivalences hold: $F_G^{S_1}\approx  F_G^{S_2}$ and $\Phi_G^{S_1}\approx  \Phi_G^{S_2}$. 
We will denote the equivalence class of the depth functions $F_G^S$ and full depth function $\Phi_G^S$ of the group $G$ by $F_G$ and $\Phi_G$ respectively.
%

We are now ready to state our results. 
In our first result, we will already see that the concept of depth function is appropriate when dealing with branch groups, as it quantifies how far down the tree the group  acts nontrivially. The reader can find the precise definitions of regular branch groups, contracting property and congruence subgroup property  in \S \ref{sec:prelim}.

\begin{thm} \label{MainResultUpper}
Let $H$ be a finitely generated group acting on a rooted $d$-regular tree. Suppose that $H$ is regular branch and contracting with contraction coefficient $\lambda<1$.
Then 
$$\Phi_H(n) \preceq 2^{n^{\frac{1}{\log_d(1/\lambda)}}}.$$
\end{thm}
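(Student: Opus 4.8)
The plan is to exhibit, for each $n$, a single finite quotient of $H$ into which the ball $B_H^S(n)$ injects, namely a level-stabilizer quotient $H/\stabilizer_H(k)$, and to choose the depth $k$ as small as the contracting property allows. Recall that $\stabilizer_H(k)$, the stabilizer of the $k$-th level of the tree, is a normal subgroup of finite index, and that the quotient map $\phi_k\colon H\to H/\stabilizer_H(k)$ is an epimorphism onto a finite group. For $\phi_k$ to be injective on $B_H^S(n)$ it is necessary and sufficient that no nontrivial element of word length at most $2n$ lie in $\stabilizer_H(k)$: two elements $g,h\in B_H^S(n)$ have the same image iff $g^{-1}h\in\stabilizer_H(k)$, and $\|g^{-1}h\|_S\le 2n$. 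Thus the argument reduces to two tasks: (i) finding the smallest $k=k(n)$ with $\stabilizer_H(k)\cap B_H^S(2n)=\{1\}$, and (ii) estimating $|H/\stabilizer_H(k)|$.

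For task (i) I would use the contracting property together with faithfulness of the action. The contracting property furnishes a finite nucleus $\neb$ and a constant $C$ such that, for every $g\in H$, all sections $g|_v$ at vertices $v$ of level $\ell\ge \log_{1/\lambda}\|g\|_S+C$ lie in $\neb$. Since $\neb$ is finite and the action is faithful, there is a level $L$ such that the identity is the only element of $\neb$ fixing the entire $L$-th level. Now set $k=k_0+L$ with $k_0=\lceil\log_{1/\lambda}(2n)\rceil+C$, and suppose $g\in\stabilizer_H(k)$ with $\|g\|_S\le 2n$ and $g\neq 1$. Because $g$ fixes level $k_0$, each section $g|_v$ with $|v|=k_0$ is a well-defined element of $H$; it lies in $\neb$ by the choice of $k_0$, and it fixes the $L$-th level of the subtree below $v$ because $g\in\stabilizer_H(k_0+L)$. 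Hence every such section is trivial, forcing $g=1$, a contradiction. Therefore $k(n)\le k_0+L=\log_{1/\lambda}(2n)+O(1)$. This step is the crux: word-length contraction alone only shows the sections are short, and one genuinely needs the finiteness of the nucleus together with faithfulness to upgrade ``short'' to ``trivial''.

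For task (ii) I would bound $|H/\stabilizer_H(k)|$ by the order of the automorphism group of the depth-$k$ truncation of the $d$-regular tree, namely $(d!)^{(d^{k}-1)/(d-1)}\le A^{d^{k}}$ for a constant $A$ depending only on $d$. Writing $\alpha=\frac{1}{\log_d(1/\lambda)}=\log_{1/\lambda}d$ and using the estimate for $k$ from task (i), the identity $d^{\log_{1/\lambda}(2n)}=(2n)^{\log_{1/\lambda}d}$ gives
\begin{equation*}
d^{k}=d^{k_0+L}\le C_3\,(2n)^{\frac{1}{\log_d(1/\lambda)}},
\end{equation*}
so that $|H/\stabilizer_H(k)|\le A^{C_3(2n)^{\alpha}}=2^{C_4(2n)^{\alpha}}$.

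Finally I would convert this into the required asymptotic. Since $\Phi_H^S(n)\le|H/\stabilizer_H(k)|\le 2^{C_4(2n)^{\alpha}}=2^{(C_4 2^{\alpha})\,n^{\alpha}}$, choosing $C\ge 2\,C_4^{1/\alpha}$ yields $C_4 2^{\alpha}\le C^{\alpha}$ and hence $\Phi_H^S(n)\le 2^{(Cn)^{\alpha}}$, which is exactly $\Phi_H(n)\preceq 2^{n^{1/\log_d(1/\lambda)}}$. The remaining bookkeeping---the precise quantitative form of the contracting property and the value of $L$---is routine once the preliminaries are in place, so that no step other than (i) presents a real difficulty.
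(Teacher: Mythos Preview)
Your argument is correct and follows essentially the same route as the paper: use contraction to show that every nontrivial $g$ with $\|g\|\le n$ has level at most $\log_{1/\lambda}n+O(1)$ (your nucleus $\neb$ plays exactly the role of the paper's finite ball $B_H(1+C)$, and your $L$ is the paper's constant $D$), then quotient by the corresponding level stabilizer. The one minor difference is in task~(ii): the paper invokes Lemma~\ref{lem:congsizes}, which uses the regular branch hypothesis, to bound $|H/\stabilizer_H(k)|\le C^{Dd^k}$, whereas you obtain the same shape of bound more cheaply from the embedding $H\le\Aut T_d$; your route thus shows that the regular branch assumption is not actually needed for the upper bound.
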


\begin{rk}
In \cite[Lemma $2.13.9$]{MR2162164} it is shown that the contracting coefficient $\lambda$ satisfies 
 $\frac{1}{\log_d(1/\lambda)}\geq 1$. 
 Thus, any upper bound achieved by Theorem \ref{MainResultUpper} is at least exponential.
\end{rk}

\smallskip
For just-infinite regular branch groups with the congruence subgroup property, we can find that their growths are super-polynomial.

\begin{thm} \label{MainResultLower}
Let $H$ be a finitely generated just-infinite group acting on a rooted $d$-regular tree. Suppose that $H$ is regular branch with the congruence subgroup property. 

Then $F_H(n) \succeq 2^{n^{\frac{1}{\log_d(\delta)}}}$ for some $\delta=\delta(H)>1$.
\end{thm}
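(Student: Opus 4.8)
The plan is to exhibit, for each level $m$, a single ``deep'' element $g_m$ that is cheap to write but expensive to detect in finite quotients, and then to convert the pair (word length, depth) into the claimed lower bound on $F_H$. Throughout I use that $F_H(n)\ge D_H(g)$ whenever $\|g\|_S\le n$, so it suffices to control one good element per scale.

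First I would isolate the constant $\delta$. Fix a finite generating set $S$ and a nontrivial element $k$ of the branching subgroup $K$; since $H$ is regular branch, $K$ has finite index and the inclusion $K\times\cdots\times K\le K$ lets me ``push'' each generator one level down into a single subtree at bounded word cost. Let $\delta=\delta(H,S)>1$ be the maximum such cost. Iterating $m$ times, the element $g_m\in\operatorname{rist}_H(v_0)$ that realizes $k$ in the subtree hanging from the leftmost level-$m$ vertex $v_0$ (and is trivial elsewhere) satisfies $\|g_m\|_S\le C\delta^m$ for a constant $C=C(k)$. By construction $g_m\in\operatorname{St}_H(m)$ while $g_m\ne 1$, so $g_m$ acts nontrivially only strictly below level $m$. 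I would also choose $k\in K'\setminus\{1\}$ (possible since $K$ is non-abelian), so that $g_m=\phi_{v_0}(k)\in\operatorname{rist}_H(v_0)'\subseteq\operatorname{rist}_H(m)'$.

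Next I would bound $D_H(g_m)$ from below. Let $N\lhd H$ with $g_m\notin N$. If $N=\{1\}$ the index is infinite, so by just-infiniteness I may assume $N$ has finite index; by the congruence subgroup property $N\supseteq\operatorname{St}_H(s)$ for some $s$, whence $H/N$ is a quotient of the finite congruence quotient $Q_s=H/\operatorname{St}_H(s)$ in which the image of $g_m$ is nontrivial. The elementary branch estimate $|Q_m|\ge 2^{d^{m-1}}$ holds because the images in $Q_m$ of the $d^{m-1}$ rigid stabilizers $\operatorname{rist}_H(v)$, $|v|=m-1$, act on pairwise disjoint one-level subtrees, each nontrivially by level-transitivity, and generate their direct product. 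The goal is therefore to show $D_H(g_m)\ge 2^{cd^{m}}$, i.e. that $g_m$ cannot be separated by any quotient much smaller than $Q_m$.

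The engine for this is the branch product structure $\operatorname{rist}_H(m)=\prod_{|v|=m}\operatorname{rist}_H(v)$ together with level-transitivity: conjugating $g_m$ by elements carrying $v_0$ onto each level-$m$ vertex and using $N\lhd H$ places, in each of the $d^m$ factors, a nontrivial element outside $N$, and these have pairwise disjoint supports in $H/N$. One then hopes that $|H:N|\ge|\operatorname{rist}_H(m):\operatorname{rist}_H(m)\cap N|\ge 2^{cd^m}$. Making this rigorous is the main obstacle: a priori the image of $\prod_v\operatorname{rist}_H(v)$ in $H/N$ could collapse along a ``diagonal'' (for instance a parity homomorphism), giving only index $2$. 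The choice $k\in K'$ removes the abelianization collapse, since $g_m\in\operatorname{rist}_H(m)'$ is invisible to $\operatorname{rist}_H(m)/\operatorname{rist}_H(m)'$; ruling out the remaining collapses is exactly where just-infiniteness enters, via the structure of normal subgroups of a branch group — every nontrivial normal subgroup contains some $\operatorname{rist}_H(t)'$, and one checks that $M:=N\cap\operatorname{rist}_H(m)\lhd H$, being a finite-index normal subgroup avoiding $g_m$, forces $t>m$ and hence that $M$ is ``thin'' on the level-$m$ rigid stabilizer. I expect the bulk of the work to be packaging this into a clean statement that any separating quotient must resolve the $d^{m-1}$ independent one-level subtrees, yielding $|H:N|\ge 2^{cd^{m}}$.

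Finally I would reindex. For each $m$ I have $\|g_m\|_S\le C\delta^m$ and $D_H(g_m)\ge 2^{cd^{m}}$, and the identity $1/\log_d\delta=\log_\delta d$ gives $d^m=(\delta^m)^{1/\log_d\delta}$. Given $n$, I take $m$ maximal with $\|g_m\|_S\le An$; then $\delta^m\ge a\,n$ for a fixed $a>0$, so $d^m\ge a^{1/\log_d\delta}n^{1/\log_d\delta}$, and enlarging the constant $A$ (permissible in the definition of $\succeq$) absorbs both $c$ and the factor $a^{1/\log_d\delta}$, since a constant factor inside $\delta^m$ contributes only a constant factor to $n^{1/\log_d\delta}$. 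This gives $F_H(An)\ge 2^{n^{1/\log_d\delta}}$, that is $F_H\succeq 2^{n^{1/\log_d\delta}}$ with $\delta=\delta(H)>1$, as required.
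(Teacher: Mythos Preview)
Your construction of the deep elements $g_m$ and the word-length estimate $\|g_m\|\le C\delta^m$ is correct and matches the paper's argument almost verbatim. The final reindexing step is also fine.

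The genuine gap is in the middle step, where you try to bound $D_H(g_m)$ from below directly via the product structure of $\operatorname{rist}_H(m)$ and a ``no diagonal collapse'' argument. You yourself flag this as incomplete (``I expect the bulk of the work to be packaging this \dots''), and indeed it is: knowing that $N\lhd H$ misses $g_m\in\operatorname{rist}_H(m)'$ and that every nontrivial normal subgroup contains some $\operatorname{rist}_H(t)'$ only tells you $t\ge m+1$; it does \emph{not} by itself bound $[H:N]$ from below by anything like $2^{cd^m}$. One still has to argue that $N$ cannot meet many of the $d^m$ rigid stabilizers nontrivially while having small index, and this is exactly where a quantitative statement is needed.

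The paper sidesteps this entirely by invoking the \emph{quantitative congruence subgroup property}: for a just-infinite regular branch group with CSP there is a fixed $N_0$ such that any finite-index normal subgroup containing an element of level $k$ already contains $\stabilizer_H(k+N_0)$. Applied contrapositively, if $\Delta$ avoids $g_m$ (an element of $\stabilizer_H(m-1)$) then $\Delta$ cannot contain $\stabilizer_H(m-1)$, hence every element of $\Delta$ has level at least $m-N_0-1$, i.e.\ $\Delta\le\stabilizer_H(m-N_0-1)$. Combined with the elementary estimate $[H:\stabilizer_H(j)]\ge 2^{d^{j}}$, this gives $D_H(g_m)\succeq 2^{d^{m}}$ in one line. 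Your structure-theorem ideas are in fact the ingredients that go into \emph{proving} the quantitative CSP, so you are on the right track; but as written you have neither invoked that result nor completed its proof, and without it the depth lower bound is not established.
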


A map is \emph{near exponential} if it is bounded above and below by expressions of the form $2^{n^c}$ for $c > 0$.
We immediately obtain the following.

\begin{cor} \label{cor:main}
	Let $H$ be a finitely generated just-infinite regular branch contracting group with the congruence subgroup property. Then $\Phi_H$ and $F_H$ are both near exponential.
\end{cor}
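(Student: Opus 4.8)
The plan is to combine the two theorems with the elementary comparison $F_G^S(n) \le \Phi_G^S(n)$ recorded in the introduction; no new argument is really needed. The first thing I would check is that the hypotheses of the corollary---just-infinite, regular branch, contracting, congruence subgroup property---contain every hypothesis required to invoke both Theorem \ref{MainResultUpper} (regular branch and contracting with coefficient $\lambda < 1$) and Theorem \ref{MainResultLower} (just-infinite, regular branch, congruence subgroup property). So both conclusions are simultaneously available, and the passage to equivalence classes $F_H, \Phi_H$ is legitimate by the generating-set independence cited from \cite{B09} and \cite{BS14}.

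For the upper bound I would apply Theorem \ref{MainResultUpper} directly to obtain $\Phi_H(n) \preceq 2^{n^{c_+}}$, where $c_+ = 1/\log_d(1/\lambda)$, and note $c_+ \ge 1 > 0$ by the remark following that theorem. Since $F_H^S(n) \le \Phi_H^S(n)$ holds for every finite generating set $S$, passing to equivalence classes gives $F_H \preceq \Phi_H$, so $F_H(n) \preceq 2^{n^{c_+}}$ as well. Thus both functions are bounded above by an expression of the form $2^{n^c}$ with $c = c_+ > 0$.

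For the lower bound I would apply Theorem \ref{MainResultLower} to obtain $F_H(n) \succeq 2^{n^{c_-}}$, where $c_- = 1/\log_d(\delta)$; here $\delta > 1$ forces $\log_d(\delta) > 0$ and hence $c_- > 0$. Using $F_H \preceq \Phi_H$ once more yields $\Phi_H(n) \succeq F_H(n) \succeq 2^{n^{c_-}}$, so both functions are bounded below by $2^{n^{c}}$ with $c = c_- > 0$.

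Combining the two displays, each of $F_H$ and $\Phi_H$ is trapped between $2^{n^{c_-}}$ and $2^{n^{c_+}}$, which is exactly the definition of near exponential, completing the argument. There is no genuine obstacle; the only point worth stating explicitly is that the single comparison $F_H \preceq \Phi_H$ is what transfers the upper bound (proved for $\Phi_H$) down to $F_H$ and the lower bound (proved for $F_H$) up to $\Phi_H$, so that both growth functions inherit both inequalities.
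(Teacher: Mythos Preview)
Your proposal is correct and matches the paper's own reasoning: the corollary is stated there as an immediate consequence of Theorems~\ref{MainResultUpper} and~\ref{MainResultLower}, with no additional argument beyond the comparison $F_H \preceq \Phi_H$ that you invoke. There is nothing to add or correct.
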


\noindent
It is remarkable that Corollary \ref{cor:main} indicates that regular branch contracting groups with the congruence subgroup property sit in the class of nonlinear groups in a way analogous to how arithmetic groups, and even nilpotent groups, sit in the class of linear groups. That is, the residual finiteness growths of these groups do not wander that far away from each other.
It was shown in \cite{MR2925403} that arithmetic groups have precisely polynomial growth of a fixed degree. 
In \cite{B09}, it was shown that nilpotent groups have polynomial in logarithm residual finiteness growth (and in \cite{BS14} it is shown that the full residual finiteness growth of many nilpotent groups is precisely $n^b$ for some positive integer $b$).
Finally, it was shown that all finitely generated linear groups have polynomial residual finiteness growth in \cite{BM13}.
So while the class of arithmetic groups have residual finiteness growth clustering around polynomials, and nilpotent groups around polynomial in log functions, the class of branch groups clusters around exponential functions.

We can strengthen the conclusion of Corollary \ref{cor:main} in some cases, which includes the first Grigorchuk group. We prove this in \S \ref{sec:MainResultUpperLower}.
\begin{cor} \label{MainResultUpperLower}
	Let $H$ be a finitely generated just-infinite group acting on a $d$-regular tree. 
	Assume that 		    
	$H$ is 
	\begin{enumerate}
	\item regular branch with the congruence subgroup property;
	\item contracting with the contraction coefficient $\lambda<1$.
\end{enumerate}		
	
	Suppose there exists a sequence of nontrivial elements $h_i \in H$ such that 
	$h_i \in \stabilizer_H(i)$ and $\|h_i\| \leq \lambda^{-i}$.
	Then
	$$
		\Phi_H(n) \simeq 2^{n^{\frac{1}{\log_d(1/\lambda)}}}.
	$$
\end{cor}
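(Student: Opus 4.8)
The plan is to prove the two inequalities $\Phi_H \preceq 2^{n^{1/\log_d(1/\lambda)}}$ and $\Phi_H \succeq 2^{n^{1/\log_d(1/\lambda)}}$ separately, which together give the asserted equivalence $\simeq$. The upper bound requires no new work: hypotheses (1) and (2) say precisely that $H$ is regular branch and contracting with contraction coefficient $\lambda < 1$, so Theorem \ref{MainResultUpper} applies verbatim and yields $\Phi_H(n) \preceq 2^{n^{1/\log_d(1/\lambda)}}$. All the content is therefore in producing a matching lower bound, and since $F_H(n) \leq \Phi_H(n)$ it suffices to bound $F_H$ from below. The role of hypotheses (1),(2) is to supply, via Theorem \ref{MainResultLower} and its proof, a lower estimate on the depth of deep elements, while the role of the sequence $h_i$ is to pin the word-length/depth trade-off to the exact rate $\lambda$ governing the upper bound.

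Concretely, I would first isolate the depth estimate underlying Theorem \ref{MainResultLower}: for a just-infinite regular branch group with the congruence subgroup property, any nontrivial $g \in \stabilizer_H(i)$ is detected only in quotients of size comparable to $[H : \stabilizer_H(i)]$, so $D_H(g) \succeq [H : \stabilizer_H(i)]$. For a regular branch group the level-stabilizer index grows doubly exponentially, $[H : \stabilizer_H(i)] = 2^{\Theta(d^i)}$, since the branching forces a definite proportion of the $d^i$ vertices at level $i$ to carry independent action. Applying this to the given elements yields $D_H(h_i) \succeq 2^{c d^i}$ for some $c > 0$. Now I would feed in the word-length hypothesis $\|h_i\| \leq \lambda^{-i}$: setting $n_i = \lfloor \lambda^{-i} \rfloor$, we have $i \approx \log_{1/\lambda} n_i$ and hence $d^i \approx n_i^{\log_{1/\lambda} d} = n_i^{1/\log_d(1/\lambda)}$, so that $F_H(n_i) \geq D_H(h_i) \succeq 2^{c\, n_i^{1/\log_d(1/\lambda)}}$ along this geometric subsequence.

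Passing from the subsequence $(n_i)$ to all $n$ is routine: $F_H$ is nondecreasing and consecutive $n_i$ have bounded ratio $n_{i+1}/n_i \to 1/\lambda$, so for arbitrary $n$ one chooses $i$ with $n_i \leq n < n_{i+1}$, giving $n_i > \lambda n$ and therefore $F_H(n) \geq F_H(n_i) \succeq 2^{c(\lambda n)^{1/\log_d(1/\lambda)}} \succeq 2^{n^{1/\log_d(1/\lambda)}}$, the constant $\lambda^{1/\log_d(1/\lambda)}$ being absorbed by the rescaling built into $\preceq$. Combining this with the upper bound from Theorem \ref{MainResultUpper} gives $\Phi_H(n) \simeq 2^{n^{1/\log_d(1/\lambda)}}$. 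The main obstacle — and the step I would spend the most care on — is the depth estimate $D_H(g) \succeq [H : \stabilizer_H(i)]$ for nontrivial $g \in \stabilizer_H(i)$: here one must combine just-infiniteness (to force every separating normal subgroup to have finite index) with the congruence subgroup property (to reduce separation to the congruence quotients $H/\stabilizer_H(m)$) and the branch structure (to prevent a deep nontrivial element from being killed in a quotient much smaller than $[H:\stabilizer_H(i)]$). This is exactly the mechanism already present in the proof of Theorem \ref{MainResultLower}; the only new input is that the sharper bound $\|h_i\| \leq \lambda^{-i}$ replaces the generic rate $\delta$ by $1/\lambda$, upgrading the exponent from $1/\log_d \delta$ to the optimal $1/\log_d(1/\lambda)$.
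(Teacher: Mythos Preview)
Your proposal is correct and follows essentially the same route as the paper: invoke Theorem~\ref{MainResultUpper} for the upper bound, and for the lower bound reuse the depth estimate from the proof of Theorem~\ref{MainResultLower} (quantitative congruence subgroup property plus Lemma~\ref{lem:congsizes}) with the sequence $h_i$ in place of the generic $g_i$, so that the rate $\delta$ is replaced by $1/\lambda$. The paper's own proof is a two-line reference to exactly these ingredients; you have simply unpacked the subsequence-to-all-$n$ step, which the paper leaves implicit.
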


The class of regular branch groups is rich and well-studied. Please see Examples \ref{ManyExamples} for a quick overview of existing examples.

Our next result shows that the conclusion of Corollary \ref{cor:main} cannot be strengthened over the class of all finitely generated just-infinite regular branch contracting groups with the congruence subgroup property. We prove this result in \S \ref{sec:perv}.

\begin{thm} \label{prop:gupta}
Let $r > 0$. Then there exists a prime $p$ such that if $G_p$ is the Gupta-Sidki $p$-group, then 
$$F_{G_p} (n) \succeq 2^{n^r}.$$
Moreover, $G_p$ for $p\geq 5$ have super-exponential residual finiteness growths.
\end{thm}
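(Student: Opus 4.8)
The plan is to obtain the lower bound by feeding an explicit, \emph{uniform-in-$p$} family of short-but-deep elements into the general mechanism of Theorem~\ref{MainResultLower}. First I would record that each Gupta--Sidki $p$-group $G_p$ satisfies the hypotheses of that theorem: it is a finitely generated just-infinite $p$-group, it is regular branch (over a finite-index subgroup, e.g.\ $\gamma_3(G_p)$ or $G_p'$), and it enjoys the congruence subgroup property. These are classical facts about the Gupta--Sidki groups, which I would simply cite. Theorem~\ref{MainResultLower} then already yields $F_{G_p}(n) \succeq 2^{n^{1/\log_p \delta}}$ for \emph{some} $\delta = \delta(G_p) > 1$; since the abstract statement gives no quantitative grip on $\delta$, the entire content of the proof is to control $\delta$ as $p$ varies. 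I would isolate $\delta(G_p)$ as the exponential growth rate of the shortest word length of a nontrivial element of $\stabilizer_{G_p}(m)$, which is the reciprocal $1/\lambda$ of the contraction coefficient (cf.\ Theorem~\ref{MainResultUpper}).

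The key step is to construct a sequence $h_m \in \stabilizer_{G_p}(m) \setminus \{1\}$ with $\|h_m\| \leq C\,\delta_0^{\,m}$ for absolute constants $C$ and $\delta_0$ that do \emph{not} depend on $p$; I expect $\delta_0 = 3$. The engine is the self-replicating recursion of the generator $b$, one of whose first-level sections is again $b$ while the others are $a^{\pm 1}$ or trivial. Pushing a short element one level deeper along this branch costs only a bounded multiplicative factor, coming from correcting the $a^{\pm1}$ sections with the branch structure, and crucially this correction cost is governed by the fixed recursion for $b$ rather than by the degree $p$. Iterating gives the geometric bound $\|h_m\| \lesssim \delta_0^{\,m}$. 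I would then run the depth estimate inside Theorem~\ref{MainResultLower}: a nontrivial element first acting at level $m$ has depth bounded below by the index of the congruence subgroup, with $\log_2[G_p:\stabilizer_{G_p}(m)] \asymp p^{m}$, so $D_{G_p}(h_m) \succeq 2^{c\,p^{m}}$. Setting $n \approx \delta_0^{\,m}$, i.e.\ $m \approx \log_{\delta_0} n$, and using $p^{\log_{\delta_0} n} = n^{\log_{\delta_0} p}$, this converts into $F_{G_p}(n) \succeq 2^{n^{\log_{\delta_0} p}} = 2^{n^{1/\log_p \delta_0}}$.

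Both assertions follow once $\delta_0 = 3$ is established uniformly. For the first part, given $r>0$ I would take any prime $p \geq 3^{r}$, so that the exponent $\log_3 p \geq r$ and hence $F_{G_p}(n) \succeq 2^{n^{r}}$. For the ``moreover,'' when $p \geq 5$ we have $\delta_0 = 3 < 5 \leq p$, so the exponent $\log_3 p > 1$ and the growth is strictly super-exponential; the borderline $p = 3$ gives exponent exactly $1$ (merely exponential), which is precisely what forces the restriction $p \geq 5$. The main obstacle is exactly the uniform bound $\delta_0 = 3$: if the per-level descent cost scaled with the degree (say $\delta_0 \asymp p$) the exponent would collapse to $1$ and the first part would fail entirely, so showing that one level of depth can be bought at a cost independent of $p$ is the heart of the matter. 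This is where the special form of the Gupta--Sidki recursion is indispensable, and it is also where the small primes must be treated separately, since for small $p$ the $a^{\pm1}$-sections can collide with the replicating $b$-coordinate and alter the bookkeeping. A secondary, and I expect routine, point is to check that the constants hidden in $\log_2[G_p:\stabilizer_{G_p}(m)] \asymp p^{m}$ enter only through the base of the outer exponential and are absorbed by the $\succeq$ relation.
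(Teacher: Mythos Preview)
Your overall architecture matches the paper exactly: produce nontrivial $g_k\in\stabilizer_{G_p}(k)$ with $\|g_k\|\le C\cdot 3^k$ for constants independent of $p$, feed this into the argument behind Theorem~\ref{MainResultLower} together with Lemma~\ref{lem:congsizes} to get $F_{G_p}(n)\succeq 2^{n^{\log(p)/\log(3)}}$, and then read off both conclusions by choosing $p$ with $\log_3 p>r$ (respectively observing $\log_3 p>1$ for $p\ge 5$). All of that is correct.

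The genuine gap is the construction yielding the uniform factor $3$. Your heuristic---push along the replicating coordinate of $y$ and ``correct the $a^{\pm1}$ sections with the branch structure''---does not obviously give a $p$-independent cost: placing an element into a single first-level coordinate via the branching subgroup $K$ typically costs on the order of $[K:K^p]$, which grows with $p$ and would collapse the exponent to $1$, exactly the failure mode you flag. The paper's device avoids the branch structure entirely at this step. One works inside $[G_p,G_p]$ with an auxiliary norm
\[
\|g\|^*=\min\Bigl\{\textstyle\sum_i(2\|a_i\|+4):g=\prod_i a_i[y,x]^{\epsilon_i}a_i^{-1}\Bigr\},
\]
takes $y^x$ as an extra generator, and exploits two facts: first, $c:=[y_{p-1},y_0]=([y,x],1,\dots,1)$; second, since $y=(x,x^{-1},1,\dots,1,y)$ and $y^x=(y,x,x^{-1},1,\dots,1)$, the substitution $x\mapsto y$, $y\mapsto y^x$ sends a word $w(x,y)$ to a word of the \emph{same} length whose first section is $w(x,y)$. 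Applying this substitution to each conjugator $a_i$ and replacing each $[y,x]^{\pm1}$ by $c^{\pm1}$ produces $u=(g,1,\dots,1)$ with $\|u\|^*\le 3\|g\|^*$; the other sections vanish automatically because $c$ is supported only on the first coordinate, so no correction is needed. This is where the restriction $p\ge 5$ enters (so that the nontrivial sections of $y$ and $y^x$ do not overlap beyond the first coordinate), and it is the step your proposal would need to supply.
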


We have also developed methods to deduce near exponential full residual finiteness growth under some weaker assumptions on a group. We prove the following in \S \ref{sec:perv}. Note that the Pervova group does not have the congruence subgroup property \cite{MR2308183}.


\begin{prop} \label{prop:perv}
	The Pervova group has near exponential $\Phi_G$ growth.
\end{prop}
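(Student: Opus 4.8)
The plan is to trap $\Phi_G$ between two functions of the form $2^{n^c}$. The upper bound comes essentially for free from Theorem \ref{MainResultUpper}, while the lower bound comes from a counting argument that never refers to individual finite quotients and so is completely insensitive to the failure of the congruence subgroup property.

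For the upper bound I would record that the Pervova group $G$ is a finitely generated regular branch group which is contracting with some coefficient $\lambda < 1$; these properties are built into its definition. Theorem \ref{MainResultUpper} then applies verbatim and yields $\Phi_G(n) \preceq 2^{n^{1/\log_d(1/\lambda)}}$, a function of the form $2^{n^c}$ with $c \ge 1$, using no congruence hypothesis whatsoever.

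For the lower bound the starting point is the elementary inequality $\Phi_G(n) \ge |B_G^S(n)|$: if an epimorphism $\phi \colon G \to Q$ is injective on the ball $B_G^S(n)$, then $|Q| \ge |\phi(B_G^S(n))| = |B_G^S(n)|$, and minimizing over all admissible $\phi$ gives the claim. This is precisely why the proposition concerns $\Phi_G$ rather than $F_G$: it uses injectivity on the \emph{whole} ball, which is the defining feature of the full depth function, and it places no condition on the kernels $\ker\phi$ that occur, so the absence of the congruence subgroup property is irrelevant. It therefore suffices to bound the word growth of $G$ from below by $2^{n^{c'}}$ for some $c' > 0$.

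The heart of the argument, and the step I expect to be the main obstacle, is this word-growth estimate, which I would extract directly from the branch and contracting structure. Fix a level $k$. Since $G$ is an infinite branch group, its rigid level-$k$ stabilizer contains a direct product $K \times \cdots \times K$ of $d^k$ copies of a fixed nontrivial subgroup $K$, one for each vertex at level $k$; placing a fixed nontrivial element of $K$ at any chosen subset of these vertices produces at least $2^{d^k}$ distinct commuting elements. The contracting self-similar structure bounds the cost of pushing a section down the tree: each level multiplies word length by at most a fixed constant $M$, so every such element has length at most $(dM)^k$ up to a multiplicative constant. Setting $n \approx (dM)^k$ gives $d^k \approx n^{c'}$ with $c' = \log d / \log(dM) \in (0,1)$, whence $|B_G^S(n)| \ge 2^{d^k} \succeq 2^{n^{c'}}$. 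The delicate points are confirming that the rigid stabilizers are nontrivial and pinning down the per-level length multiplier $M$ from the explicit recursion defining the Pervova group. Feeding this into $\Phi_G(n) \ge |B_G^S(n)|$ yields $\Phi_G(n) \succeq 2^{n^{c'}}$, and together with the upper bound of the second paragraph this exhibits $\Phi_G$ as near exponential. I note that this argument uses neither just-infiniteness nor the congruence subgroup property, only that $G$ is an infinite, finitely generated, regular branch, contracting group, so it applies well beyond the Pervova group.
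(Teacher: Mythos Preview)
Your proposal is correct and takes a genuinely different route from the paper for the lower bound.

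For the upper bound both you and the paper simply invoke Theorem \ref{MainResultUpper}; there is nothing to compare. For the lower bound the paper proceeds by observing that the Pervova group contains the Gupta-Sidki $3$-group $G_3$ as a subgroup; since $G_3$ is just-infinite, regular branch, and has the congruence subgroup property, Theorem \ref{MainResultLower} gives $F_{G_3}(n)\succeq 2^{n^{c}}$, and subgroup monotonicity of $F$ then yields $F_G(n)\succeq 2^{n^{c}}$ for the Pervova group itself. Your argument instead uses the trivial inequality $\Phi_G(n)\ge |B_G^S(n)|$ together with a direct count of elements of $K^{d^k}$ inside a ball, obtaining $\Phi_G(n)\succeq 2^{n^{c'}}$. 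Your route is more elementary and more general: it applies to any finitely generated regular branch group and never invokes the congruence subgroup property, just-infiniteness, or the specific containment $G_3\le G$. The paper's route, by contrast, gives the stronger conclusion $F_G(n)\succeq 2^{n^c}$, not merely a bound on $\Phi_G$; your ball-counting argument is intrinsically about $\Phi_G$ and cannot say anything about $F_G$.

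One small correction: you attribute the bound ``each level multiplies word length by at most a fixed constant $M$'' to the contracting property, but contraction controls the length of \emph{restrictions} $g|_v$, i.e., going \emph{down} the tree. The bound you need---that the element $(1,\ldots,1,k,1,\ldots,1)_1$ has length at most $M\|k\|$---goes \emph{up} the tree and comes from the regular branch structure: $K^d$ sits as a finite-index (hence undistorted) subgroup of the finitely generated group $K$. This is exactly the content of Claim \ref{claim:wordlengthgi} in the paper; your per-level constant $M$ is the paper's $\delta$. Once this is fixed, your length estimate $(dM)^k$ for an element supported on all $d^k$ vertices is correct.
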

\noindent

As an application of our results, the polynomial residual finiteness growth can be used to distinguish non-linear groups. Namely, showing that a group does not have polynomial depth function is a way to show it is not linear \cite[Theorem 1.1]{BM13}. Hence applying Theorem \ref{MainResultLower} we have the following.

\begin{cor} \label{MainApplication}
A finitely generated just-infinite regular branch group with the congruence subgroup property is not linear.
\end{cor}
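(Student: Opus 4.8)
The plan is to derive Corollary~\ref{MainApplication} as an immediate consequence of Theorem~\ref{MainResultLower} together with the known upper bound on the residual finiteness growth of linear groups. The argument is by contraposition: I would show that the depth function $F_H$ grows faster than any polynomial, and then invoke the fact that every finitely generated linear group has polynomial depth function.

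First I would apply Theorem~\ref{MainResultLower} to the group $H$, whose hypotheses (finitely generated, just-infinite, acting on a rooted $d$-regular tree, regular branch, congruence subgroup property) match the hypotheses of the corollary exactly. This yields a real number $\delta = \delta(H) > 1$ with
$$
F_H(n) \succeq 2^{n^{1/\log_d(\delta)}}.
$$
Since $d \geq 2$ and $\delta > 1$, the exponent $c := 1/\log_d(\delta)$ is a strictly positive constant, so the right-hand side is a function of the near-exponential type $2^{n^c}$. I would then record the elementary observation that such a function dominates every polynomial: for each integer $k \geq 0$ one has $\lim_{n \to \infty} n^k / 2^{n^c} = 0$, so $F_H$ is not bounded above by any polynomial.

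Next I would invoke \cite[Theorem 1.1]{BM13}, which asserts that every finitely generated linear group $G$ satisfies $F_G(n) \preceq n^k$ for some $k$. Suppose toward a contradiction that $H$ were linear. Then $F_H(n) \preceq n^k$ for some $k$, contradicting the lower bound $F_H(n) \succeq 2^{n^c}$ established above, since no polynomial can dominate a stretched exponential $2^{n^c}$ with $c > 0$. Hence $H$ is not linear.

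There is essentially no serious obstacle here once Theorem~\ref{MainResultLower} is in hand; the only point requiring care is the bookkeeping with the equivalence relations $\preceq$ and $\succeq$, namely confirming that $2^{n^c}$ with $c > 0$ genuinely separates $H$ from the polynomial regime of linear groups. This is precisely the mechanism advertised in the paragraph preceding the corollary: exhibiting super-polynomial depth is a certificate of non-linearity via \cite[Theorem 1.1]{BM13}.
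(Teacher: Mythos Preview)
Your proposal is correct and follows essentially the same route as the paper: apply Theorem~\ref{MainResultLower} to obtain a super-polynomial lower bound on $F_H$, and then invoke \cite[Theorem~1.1]{BM13} to conclude non-linearity. The paper states this deduction in the paragraph immediately preceding the corollary rather than giving a separate formal proof, but the argument is identical.
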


A more general result on non-linearity of any weakly branch (and, therefore, any branch) group was shown in \cite{MR2273978}.

It would be interesting to determine whether intermediate growth occurs in the class of branch groups (or even in the class of all finitely generated groups).

\smallskip
This article is organized as follows.
In \S \ref{sec:rfgrowth} we present notation on residual finiteness growth functions.
In \S \ref{subsec:branch} we present notation and prove some basic properties about branch groups, self-similar and contracting groups.
In \S \ref{sec:proofs} we give proofs of Theorems \ref{MainResultUpper} and \ref{MainResultLower}. 
In \S \ref{sec:perv} we give proofs of Theorem \ref{prop:gupta} and Proposition \ref{prop:perv}.

\subsection*{Acknowledgements}
We thank Pierre de la Harpe for a number of corrections and suggestions.


\section{Preliminaries} \label{sec:prelim}

\subsection{Residual finiteness growth} \label{sec:rfgrowth}
Let $G$ be a finitely generated residually finite group with a finite generating set $S$ and let $\phi: G\to Q$ be an epimorphism onto a finite group $Q$. 
We say that a set $A \subseteq G$ is \emph{detected} by $Q$ if $A\cap \ker \phi \subset \{1\}$. 
We say $A$ is \emph{fully detected} by $Q$ if $\phi |_A$ is an injection. 
Using this notation, $D_G(g)=\min \{|Q| : Q \text{ detects } \{g\}\}$. 
Then $F_G^S(n)$ is defined to be the maximal value of $D_G(g)$ over $B_G^S(n)$, the word metric ball of radius $n$ with respect to $S$. 
Further, $\Phi_G(n)$ is defined to be the minimal finite quotient $Q$ of $G$ that fully detects $B_G^S(n)$. 

We define $F_G$ and $\Phi_G$ as equivalence classes of functions whose values do not depend on generating set. 
When values are explicitly computed for an $n \in \N$, we list the depending on generating set $S$ of $G$ by writing $F_G^S(n)$ and $\Phi_{G}^S(n)$. 

We list some basic properties of $F_G$ and $\Phi_G$ for the convenience of the reader:
\begin{enumerate}
\item $F_\Z(n) \approx \log(n)$ \cite[Theorem 2.2]{B09} while it is easy to see that $\Phi_\Z(n) \approx n$.
\item Let $G$ and  $H \leq G$ be two finitely generated residually finite groups. Then $F_H \preceq F_G$ and $\Phi_H \preceq \Phi_G$.
\item Let $G$ be a group and $H$ a finite-index subgroup of $G$.
Then $F_G \preceq (F_H)^{[G:H]}$ \cite[Lemma 1.2]{B09}.
\item For every finitely generated group $G$, we have $F_G \preceq \Phi_G$.
\item
If $\phi : G \to H$ is a homomorphism between finitely genereated residually finite groups, then there is, in general, no relationship between $F_G$ and $F_H$.
For instance, while any nonabelian free group has growth between $n^{2/3}$ and $n^3$ \cite{Th15}, there exists groups with arbitrarily large residual finiteness growths \cite{BS13b} and the free group maps onto $\mathbb{Z}$.
In some particular cases, one can draw a relationship: see, for instance, \cite[Lemma 2.4]{BK12}.
\end{enumerate}

\subsection{Regular branch, self-similar and contracting groups}
\label{subsec:branch}

The groups we shall consider will all be subgroups of the group $\Aut T$ of automorphisms of a regular rooted tree $T$. Let $X$ be a finite alphabet with $|X|\geq 2$. The vertex set of the tree $T_X$ is the set of finite sequences over $X$; two sequences are connected by an edge when one can be obtained from the other by right-adjunction of a letter in $X$. The root is the empty sequence $\emptyset$, and the children of $v$ are all $v x$ for $x\in X$. The set $X^n\subset T_X$ is called the \emph{$n$th level} of the tree $T_X$. An automorphism of the tree is a bijective morphism of $T_X$.

Let $g \in \Aut T_X$ be an automorphism of the rooted tree $T_X$. 
Consider a vertex $v\in T_X$ and the subtrees $vT_X=\{vw\mid w\in T_X\}$ and $g(v)T_X=\{g(v)w\mid w\in T_X\}$. 
Notice that a map $vT_X\to g(v)T_X$ is a morphism of rooted trees. 
Moreover, the subtrees $vT_X$ and $g(v)T_X$ are naturally isomorphic to $T_X$. Identifying $vT_X$ and $g(v)T_X$ with $T_X$ we get an automorphism $g|_v\colon T_X\to T_X$ uniquely defined by the condition 
$$
g(vw)=g(v)g|_v(w)
$$ for all $w\in T_X$.
We call the automorphism $g|_v$ the \emph{restriction} of $g$ on $v$. Notice the following obvious properties of the restrictions:
\begin{align*}
g|_{v_1 v_2}&=g|_{v_1}|_{v_2}\\
(g_1\cdot g_2)|_v&=g_1|_{g_2(v)}\cdot g_2|_v.
\end{align*}

It follows that the action of the automorphism $g \in \Aut T_X$  can be seen as $\pi_g(g_1,\dots, g_{|X|})$, and we will often write  $$g=\pi_g(g_1,\dots, g_{|X|}),$$ where the permutation $\pi_g \in Sym(X)$ is defined by the action of $g$ on the first level of the tree, and $g_1,\dots, g_d \in \Aut T_X$ are the restrictions of $g$ on the vertices of the first level of $T_X$. 

\medskip
A subgroup $G$ of $\Aut(T_X)$ is \emph{self-similar} if for every $g\in G$ and every $v\in T_X$ the restriction $g|_v \in G$.
\medskip

An obvious example of a self-similar group is $\Aut T_X$ itself.

We further establish a notion of contraction of the self-similar action.

\begin{defn} Let $G\leq \Aut T_X$ be a self-similar finitely generated group with a finite generating set $S$. The number 
\begin{equation}
\label{contraction}
\lambda_{(G,T_X)}=\limsup_{n\rightarrow \infty}\sqrt[n]{\limsup_{\|g\|_S\rightarrow \infty} \max_{v\in X^n}\frac{\|g|_v\|_S}{\|g\|_S}}
\end{equation}
is called the \emph{contraction coefficient} of the action $(G, T_X)$.
Note that the limit in the definition does not depend on the choice of generating set (see \cite[Lemma $2.11.10$]{MR2162164}). 

A self-similar group $G \leq \Aut(T_X)$ is called \emph{contracting} if $\lambda_{(G,T_X)}<1$. In other words, a self-similar group $G \leq \Aut(T_X)$ is 
\emph{contracting} if there exist positive constants $\lambda<1$, $k_0$ and $C$ such that for every element $g\in G$ and every vertex $v\in T_X$ of level $k\geq k_0$ the following inequality holds
\begin{equation*}
\|g|_v\|_S<\lambda^{k} \|g\|_S+C.
\end{equation*}
\end{defn}

We refer the reader to Examples \ref{ManyExamples} and to \cite{MR2162164} for examples of contracting actions. Whilst many self-similar actions were proved to be contracting, finding a method to compute the exact value of the contraction coefficient is an interesting open question.


\medskip
We will need more notation to define branch and regular branch groups. 
Let $G\leq \Aut T_X$ be an automorphism group of the rooted tree $T_X$. 
For a vertex $v\in T_X$ the \emph{vertex stabilizer} is the subgroup consisting of the automorphisms that fix the sequence $v$: $$\stabilizer_G(v)=\{g\in G\mid g(v)=v\}.$$ 
The \emph{$n$-th level stabilizer} (also called a \emph{principal congruence subgroup}) is the subgroup $\stabilizer_G(n)$ consisting of the automorphisms that fix all vertices of the $n$th level:  $$\stabilizer_G(n)=\cap_{v\in X^n} \stabilizer_G(v).$$
Stabilizer subgroups $\stabilizer_G(n)$ with $n\geq 0$ are normal in $G$. 

Notice that any $g\in \stabilizer_G(n)$ can be identified in a natural way with the collection $g_1, \dots, g_{|X|^n}$ of elements of $\Aut T_X$ where $g_i=g|_{v}$ is the restriction of $g$ on the vertex $v$ of level $n$ having the number $i$ in the natural ordering of the vertices in the $n$-the level $(1\leq i\leq |X|^n)$. We say that $g$ is \emph{of level $n$} if $g\in \stabilizer_G(n)\setminus \stabilizer_G(n+1)$ and we write $g=(g_1, \dots, g_{|X|^n})_n$. 

The \emph{rigid stabilizer} $\operatorname{rist}_G(v)$ of a vertex $v\in T_X$ is the subgroup of $G$ of all automorphisms acting non-trivially only on the vertices of the form $vu$ with $u\in T_X$: $$\operatorname{rist}_G(v)=\{g\in G\mid g(w)=w \text{ for all } w\notin vT_X\}$$
The \emph{$n$th level rigid stabilizer} $$\operatorname{rist}_G(n)=\langle \operatorname{rist}_G(v)\mid v\in X^n\rangle$$ is the subgroup generated by the union of the rigid stabilizers of the vertices of the $n$th level.


We say that a subgroup $K$ \emph{geometrically contains} $K^{|X|^i}$, for some $i\geq 1$, if for every $k_1, \dots, k_{|X|^i} \in K$ there exists an element $k\in K$ such that $k=(k_1, \dots, k_{|X|^i})_i$.

\medskip

We say that a level transitive group\footnote{\emph{i.e.} $G$ acts transitively on each level of $T_X$.} $G\leq Aut T_X$ is \emph{branch} if $\operatorname{rist}_G(n)$ is of finite index in $G$ for all $n \geq 1$. In this article we will restrict ourselves to the particularly important type of branch groups introduced by the following definition. 

\begin{defn}
A level transitive group $G\leq \Aut T_X$ is \emph{regular branch} if there exists a finite index subgroup $K$ of $G$ such that $K$ geometrically contains $K^{|X|}$ of finite index. 
\end{defn}

\begin{lem} \label{lem:congsizes}
Let $X$ be a finite set with $|X|\geq 2$ and let $T_X$ be the rooted regular tree as above. Let $G$ be a regular branch group acting on $T_X$.
Then there exist $C, D > 0$ such that
$$2^{|X|^{i}} \leq |G / \stabilizer_G(i)| \leq C^{D\cdot |X|^{\cdot i}}.$$
\end{lem}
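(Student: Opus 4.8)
The plan is to prove the two inequalities by separate mechanisms: the upper bound by embedding $G$ into the automorphism group of a finite truncation of $T_X$, and the lower bound by iterating the replication of subgroups guaranteed by the regular branch hypothesis.

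For the upper bound, I would identify $G/\stabilizer_G(i)$ with the image of the restriction homomorphism $G \to \Aut(T_X^{\le i})$, where $T_X^{\le i}$ denotes the finite rooted tree consisting of the first $i$ levels of $T_X$; since $\stabilizer_G(i)$ is exactly the kernel of this map, $|G/\stabilizer_G(i)|$ divides $|\Aut(T_X^{\le i})|$. An automorphism of $T_X^{\le i}$ is determined by the choice of one permutation in $\mathrm{Sym}(X)$ at each vertex of levels $0,1,\dots,i-1$, of which there are $1 + |X| + \cdots + |X|^{i-1} = \frac{|X|^i-1}{|X|-1}$; hence $|\Aut(T_X^{\le i})| = (|X|!)^{(|X|^i-1)/(|X|-1)}$. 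Taking $C = |X|!$ and $D = 1/(|X|-1)$ gives $|G/\stabilizer_G(i)| \le C^{D|X|^i}$. This step is routine and uses only that $G \le \Aut T_X$.

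The lower bound is the substantive part, and here the regular branch hypothesis is indispensable (level-transitivity alone controls only the first level, contributing a factor $\ge |X|$). Let $K \le G$ be the branching subgroup of finite index and set $b_j := |K/\stabilizer_K(j)|$. The key step is the replication inequality $b_i \ge b_{i-1}^{|X|}$. To establish it I use the geometric containment of $K^{|X|}$ in $K$: it furnishes, inside $\stabilizer_K(1)$, a subgroup isomorphic to the direct product $\prod_{x \in X} K_x$ of $|X|$ independent copies of $K$, where $K_x$ acts only within the subtree $xT_X$ and restricts there to all of $K$. An element $(k_x)_{x\in X}$ of this product lies in $\stabilizer_K(i)$ if and only if each $k_x$ fixes the $(i-1)$st level of its subtree, i.e. $k_x \in \stabilizer_K(i-1)$. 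Therefore the image of $\prod_x K_x$ in $K/\stabilizer_K(i)$ is isomorphic to $\prod_{x}(K/\stabilizer_K(i-1))$, which has order $b_{i-1}^{|X|}$, yielding $b_i \ge b_{i-1}^{|X|}$.

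It remains to seed and transfer the recursion. Since $K$ has finite index in the infinite group $G$, it is infinite, hence acts nontrivially on some level $\ell$, so $b_\ell \ge 2$; iterating $b_i \ge b_{i-1}^{|X|}$ from $i = \ell$ gives $b_i \ge 2^{|X|^{\,i-\ell}}$, a lower bound of the stated doubly exponential form $2^{|X|^i}$. Finally, since $\stabilizer_K(i) = K \cap \stabilizer_G(i)$ and $\stabilizer_G(i) \lhd G$, the second isomorphism theorem gives $K/\stabilizer_K(i) \cong K\stabilizer_G(i)/\stabilizer_G(i) \le G/\stabilizer_G(i)$, so $|G/\stabilizer_G(i)| \ge b_i$, completing the lower bound. \textbf{Main obstacle.} The replication inequality $b_i \ge b_{i-1}^{|X|}$ is the heart of the argument, and the delicate points are verifying that the $|X|$ branched copies genuinely act independently (so that their restrictions can be prescribed simultaneously, which is precisely what geometric containment asserts) and seeding the recursion at the first level on which $K$ acts nontrivially; controlling this offset $\ell$ is what governs the base of the outer exponential.
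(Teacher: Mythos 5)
Your proof is correct, and its two halves relate differently to the paper's argument. Your lower bound is essentially the paper's: the paper likewise seeds at the first level $i$ on which $K$ acts nontrivially and then replicates a single nontrivial witness $k\in K$ across subtrees, observing that the $2^{d^r}$ elements $(k^{\epsilon_1},\dots,k^{\epsilon_{d^r}})_r$ with $\epsilon_j\in\{0,1\}$ lie in distinct cosets of $\stabilizer_G(i+r+1)$; your recursion $b_i \geq b_{i-1}^{|X|}$, obtained from the image of $\prod_{x} K_x$ in $K/\stabilizer_K(i)$ together with the embedding $K/(K\cap\stabilizer_G(i)) \hookrightarrow G/\stabilizer_G(i)$, packages the same mechanism a little more structurally but yields the same bound. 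Your upper bound, by contrast, is genuinely different and more elementary: the paper again uses the branch data, setting $m=\max\{|G:K|,\,|K:K^{d}|\}$, proving $|K:K^{d^i}|\leq m^{d^i}$ by induction, and concluding $|G:\stabilizer_G(i)|\leq m^{d^i+1}$ from $K^{d^i}\leq \stabilizer_G(i)$, whereas you bound $|G/\stabilizer_G(i)|$ by $|\Aut(T_X^{\leq i})| = (|X|!)^{(|X|^i-1)/(|X|-1)}$, which requires nothing beyond $G\leq \Aut T_X$ and thus shows the upper bound is universal for tree automorphism groups, at the cost of the larger (and here irrelevant, since $C,D$ are arbitrary) base $|X|!$ in place of $m$. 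One cosmetic point worth noting: like the paper's own proof, your argument actually delivers $2^{|X|^{i-\ell}}$ for a constant offset $\ell$ rather than the literal $2^{|X|^i}$ of the statement; since the lemma is only ever invoked through the asymptotic relation $\succeq 2^{d^i}$, this discrepancy, present equally in the paper, is harmless.
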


\begin{proof}
Let $d=|X|\geq 2$ and let $K$ be a finite index subgroup of $G$ such that $K$ geometrically contains $K^{d}$ of finite index. 

The upper bound follows from the following. Let $m:=
\max\{|G:K|, |K:K^{d}|\}<\infty$. 
Notice that $K$ also geometrically contains the subgroup $K^{d^i}$ for $i\geq 2$. Observe that $|K:~K^{d^i}|\leq m^{d^i}$; indeed, it can be shown by induction that $|K:~K^{d^i}|\leq |K: K^{d^{i-1}}|\cdot |K: K^{d}|^{d^{i-1}}\leq m^{d^{i}}$. It follows that the subgroup $K^{d^i}$ is of index at most $m\times m^{d^i}$ in $G$. Since $K^{d^i}\leq \stabilizer_G(i)$ then $|G/\stabilizer_G(i)|\leq m^{d^i+1}$.

The lower bound follows from the following. Suppose that all elements $k\in K$ are of level at least $i\geq 0$, \emph{i.e.} $K=\stabilizer_G(i)\cap K$ and there exists $k\in K$ which acts nontrivially on the $(i+1)$th level. 
Then $k$ does not belong to the trivial coset of $\stabilizer_G(i+1)$ and thus the index of $\stabilizer_G(i+1)$ in $G$ is at least $2$. 
Observe that $K^d$ acts nontrivially on the $(i+2)$-nd level such that the elements 
$$(k,1,1,\dots, 1)_1, (1,k,1,\dots, 1)_1, \dots, (1, 1, \ldots, 1, k)_1, $$
define different non-trivial cosets of $\stabilizer_G(i+2)$. 
It follows that the index of  $\stabilizer_G(i+2)$ is bounded below by $2^{d}$. Inductively $K^{d^{r}}$ acts nontrivially on the $(i+r+1)$th level and moreover there are at least $2^{d^{r}}$ elements which define different cosets of $\stabilizer_G(i+r+1)$; therefore the index of $\stabilizer_G(i+r+1)$ is at least  $2^{d^{r}}$.
\end{proof}

A subgroup $G$ of $\Aut T_X$ is said to satisfy the \emph{congruence subgroup property} if any finite index subgroup $H$ of $G$ contains a principal congruence subgroup $\stabilizer_G(n)$ for some $n\geq 1$. 

A subgroup $G$ of $\Aut T_X$ is said to satisfy the \emph{quantitative congruence subgroup property} if there exists $N \in \N$ such that any normal subgroup $\Delta \leq G$ of finite index in $G$ containing an element of level $n$ contains $\stabilizer_G(n+N)$. It follows from \cite[Theorem $4$]{MR1765119} that a regular branch just-infinite group $G$ with the congruence subgroup property satisfies the quantative congruence subgroup property (see \cite[Proposition $3.9$]{MR1899368} for the details). The quantitative congruence subgroup property is a useful tool for estimating the depth function of a group.

\medskip
We further give examples of self-similar contracting regular branch groups with congruence subgroup property.
\begin{exmp} \label{ManyExamples}
\leavevmode
\begin{enumerate}
\item 
Let $X=\{1,2\}$. 
We will be interested in the automorphisms of $T_X$ defined inductively by:
$$
a = \sigma,\; b = (a,c),\; c = (a,d), \text{ and } d = (1,b),
$$
where $\sigma$ is the transposition $(1,2)\in Sym(X)$.

Let the \emph{first Grigorchuk group} be $\Gamma:=\left< a, b, c, d \right>$. Clearly, $\Gamma$ is self-similar. Moreover, it is $\frac{1}{2}$-contracting, just-infinite and regular branch \cite{MR764305} over the subgroup $K=\langle (ab)^2, (bada)^2, (abad)^2\rangle$. It also has the congruence subgroup property \cite[Proposition 10]{MR1765119}.

\item Let $X=\{1,\dots, p\}$ where $p$ is odd prime.
We will be interested in the automorphisms $x$ and $y$ of $T_X$ defined inductively:
$$x= \sigma, \text{ } y =(x,x^{-1}, 1,\dots, 1,y ),$$
where $\sigma$ is the cyclic permutation $(1,2,\dots p)$ on $X$. 
Let the Gupta-Sidki $p$-group be  $G_p=\langle x,y\rangle$. Clearly, $G_p$ is self-similar. It is moreover contracting, just-infinite and regular-branch over its commutator subgroup \cite{MR759409, MR767112}. Moreover, $G_p$ has the congruence subgroup property (see \cite[Proposition 2.6]{Garrido2014}).

\item There are various modifications of the Gupta-Sidki group which are self-similar, just-infinite, regular branch contracting groups having the congruence subgroup property. Here is an example of such a modification. Let $G$ be the subgroup of automorphisms on the rooted $p$-regular tree for $p\geq 7$ generated by $x=(1,2,\dots, p)$ and $y=(x^{i_1}, x^{i_2}, \dots, x^{i_p-3},1,1,1,y)$ for $0\leq i_j\leq p-1$ and $i_1\neq 0$. The group $G$ is regular branch over its commutator subgroup (see \cite[Example $10.2$]{MR1765119});

\item The Fabrykowsky-Gupta group $\mathcal{G}$ acting by automorphisms on a rooted ternary tree and generated by $a=(1,2,3)$ and $b=(a,1,b)$ is contracting, regular branch, just-infinite, virtually torsion free group with the congruence subgroup property (see \cite[6.2, 6.4]{MR1899368}). 
A natural generalization of the Fabrykowsky-Gupta example is a group $\mathcal{G}_p$ generated by automorphisms $a=(1,2,\dots, p)$ and $b=(a,1,\dots, 1, b)$ of a $p$-regular tree. For every prime $p\geq 5$, the Fabrykowsky-Gupta group $\mathcal{G}_p$ is regular branch just-infinite with the congruence subgroup property due to \cite[Example 10.1]{MR1765119} and, moreover, contracting (to see this use the equivalent definition of contracting action in  \cite{MR2162164}).
\end{enumerate}
\end{exmp}


\section{General branch group bounds} \label{sec:proofs}

\subsection{The proof of Theorem \ref{MainResultUpper}} \label{subsec:proof1}

Let $H$ be a finitely generated regular branch contracting group (acting on a rooted $d$-regular tree) and fix a generating set of $H$. 
For every $g \in H$, by the contracting property of $H$ there exist positive constants $\lambda<1$, $k_0$ and $C$ such that 
\begin{equation}
\| g|_v \| <\lambda^k\|g\|+C\leq \lambda^{k-k_0} \| g \|  +C.
\label{ineq}
\end{equation}
for $v\in T_d$ of level $k\geq k_0$.

Suppose $\|g\|=n$ for some $n\geq 1$. Consider the action of $g$ on the $k$-th level with $k=k_0-\log_{\lambda}(n)\geq k_0$. By (\ref{ineq}) we have $\| g|_v \| < \lambda^{-\log_\lambda(n)}\cdot n+C\leq 1+C$. Thus at level $k_0-\log_\lambda(n)$, we have that there exists at most $|B_H(1+C)|$ choices for the projection. It follows that the level of $g$ is at most $-\log_\lambda (n) + k_0 + D$ where $D$ is the greatest level for each of the finitely many choices of the projections for $g$ at level $k$.
Thus we may detect nontrivial $g\in B_H(n)$ by $H / \stabilizer_H(-\log_\lambda (n)+k_0+D)$. 

Since any element in $B_H(n)$ may be detected by $H/ \stabilizer_H(-\log_\lambda(n)+k_0+D)$ it follows that $B_H(n/2)$ injects into $H/\stabilizer_H(-\log_\lambda(n)+k_0+D)$. By Lemma \ref{lem:congsizes} we have $C, D > 1$ such that for all $i\geq 0$,
$$|H: \stabilizer_H(i)|\leq C^{D d^i},$$ 
which gives us the upper bound, 
$$C^{D d^{-\log_\lambda (n)+k_0+D}}$$ for $\Phi_H(n)$.
Note that $-\log_\lambda(n) = -\log_d(n)/\log_d(\lambda) = \log_d(n)/\log_d(1/\lambda)$, so the upper bound is equivalent to
$$2^{d^{\log_d(n)/(\log_d(1/\lambda))}} = 2^{n^{\frac{1}{\log_d(1/\lambda)}}},$$
thus finishing the proof of Theorem \ref{MainResultUpper}.






\subsection{The proof of Theorems \ref{MainResultLower}}\label{subsec:proof2}

Let $H$ be a finitely generated just-infinite regular branch group (acting on the $d$-regular rooted tree) over $K$ with the congruence subgroup property and let $S$ be its finite generating set.
We first construct candidates that maximize $D_H$ over the word metric balls of radius $n$.

Since $K$ is of finite index in $H$, we have that $K$ is finitely generated.
Fix a generating set $k_1, \ldots, k_m$ for $K$. 
Let $\delta =\max\{\|k_i\|_H, \|(1,\dots, 1, k_i)_1\|_K\}$. Observe that $\delta>1$: indeed, if $\delta=1$ then $K$ is trivial. 

Let $g_1=k_1$ and for $i\geq 2$ let $g_i=(1, \dots, 1, k_1)_{i-1}$.

\begin{claim} \label{claim:wordlengthgi}
For $i\geq 1$ we have $\|g_i\|_K\leq \delta^{i-1}$, and consequently $\|g_i\|_H\leq \delta^i$.
\end{claim}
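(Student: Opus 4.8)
The plan is to exploit the recursive structure of the $g_i$. The key observation is that $g_{i+1} = (1,\dots,1,g_i)_1$ for every $i\geq 1$: the element $g_{i+1}$ acts as $k_1$ on the rightmost vertex of level $i$, which seen from the first level means it is trivial on every level-one vertex except the last, where its restriction is precisely $g_i$. This suggests introducing the map $\phi\colon K\to K$ defined by $\phi(k)=(1,\dots,1,k)_1$. First I would check that $\phi$ is well defined, i.e.\ that it actually lands in $K$; this is exactly the geometric containment of $K^{|X|}$ in $K$, applied with all coordinates trivial except the last. Next, using the restriction product rule $(g_1\cdot g_2)|_v=g_1|_{g_2(v)}\cdot g_2|_v$ together with the fact that elements of $\stabilizer_K(1)$ fix the first level, I would verify that $\phi$ is a group homomorphism, since then $(1,\dots,1,k)_1(1,\dots,1,k')_1=(1,\dots,1,kk')_1$.

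The heart of the estimate is that $\phi$ is $\delta$-Lipschitz for the word metric on $K$. By the very definition of $\delta$ one has $\|\phi(k_j)\|_K=\|(1,\dots,1,k_j)_1\|_K\leq \delta$ for each generator $k_j$ of $K$. Writing an arbitrary $k\in K$ as a geodesic word $k=k_{j_1}^{\pm 1}\cdots k_{j_\ell}^{\pm 1}$ with $\ell=\|k\|_K$, applying $\phi$ and the triangle inequality yields $\|\phi(k)\|_K\leq \delta\,\|k\|_K$. Combining this with the recursion $g_{i+1}=\phi(g_i)$ and the base case $\|g_1\|_K=\|k_1\|_K\leq 1=\delta^0$, a one-line induction gives $\|g_i\|_K\leq \delta^{\,i-1}$.

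For the stated consequence I would pass from the $K$-metric to the $H$-metric: each generator $k_j$ of $K$ satisfies $\|k_j\|_H\leq \delta$, so expressing $g_i$ as a word of length at most $\delta^{\,i-1}$ in the $k_j$ and substituting bounds every factor by $\delta$ in the $H$-metric, giving $\|g_i\|_H\leq \delta\cdot\|g_i\|_K\leq \delta^{\,i}$. None of these steps is a serious obstacle; the only point requiring genuine care is confirming that $\phi$ is a well-defined homomorphism \emph{into} $K$, which is precisely where the regular branch hypothesis (geometric containment) enters, and the bookkeeping that the choice of $\delta$ simultaneously controls the Lipschitz constant of $\phi$ and the cost of converting $K$-length into $H$-length.
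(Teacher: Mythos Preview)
Your proof is correct and follows essentially the same approach as the paper: both arguments rest on the recursion $g_{i+1}=(1,\dots,1,g_i)_1$ and the observation that passing from $k$ to $(1,\dots,1,k)_1$ multiplies $K$-length by at most $\delta$, with the final conversion to $H$-length costing one more factor of $\delta$. Your packaging of this step as a $\delta$-Lipschitz homomorphism $\phi\colon K\to K$ is slightly cleaner than the paper's phrasing via the auxiliary metric on the subgroup $1\times\cdots\times 1\times K$, but the content is identical.
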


We prove the claim by induction. For $i=1$ we have $\|g_1\|_K=1$ and suppose  that $\|g_{i-1}\|_K\leq \delta^{i-2}$. 

Consider $g_i=(1,\dots, 1, k_1)_{i-1}$ which is an element of $1\times \dots \times 1\times K$. 
Then $\| g_i\|_{K}\leq \| (1,\dots, 1, k_1)_{i-1}\|_{1\times \dots \times 1\times K}\times D$ where $D$ is the maximal length of generators of $1\times \dots \times 1\times K$ in $K$. Notice, that $D\leq \delta$ and $\| (1,\dots, 1, k_1)_{i-1}\|_{1\times \dots \times 1\times K}=\|(1,\dots, 1, k_1)_{i-2}\|_K=\|g_{i-1}\|_K$. Using the step of induction we have $\|g_i\|_{K}\leq \delta^{i-2}\times \delta=\delta^{i-1}$. 

\bigskip


Let $\Delta$ be a finite index subgroup that does not contain $g_i$.
Since $H$ satisfies the \emph{quantitative congruence subgroup property} \cite[Proposition $3.9$]{MR1899368}, there exists $N \in \N$ such that any finite index subgroup $\Delta \leq H$ containing an element of level $k$ contains $\stabilizer_H(k+N)$.
Thus, if $\Delta$ has an element of level $k$, we have that  $\Delta$ contains $\stabilizer_H(k + N)$.
Since $\Delta$ does not contain $g_i$, an element of level $i-1$, we have that $\Delta$ \emph{cannot} contain
$\stabilizer_H(i-1)$. Thus $k + N > i-1$, giving $k > i - N-1.$
Thus all elements in $\Delta$ are of level at least $i - N-1$
and so by Lemma \ref{lem:congsizes},
$$
[H : \Delta]\geq [H : \stabilizer_H(i - N-1)] \succeq 2^{d^i}.
$$
Thus, by Claim \ref{claim:wordlengthgi}, it follows that $F_H(\delta^i) \succeq 2^{d^i} \implies F_H(n) \succeq 2^{n^{\frac{1}{\log_d(\delta)}}},$ as desired.
This finishes the proof of Theorems \ref{MainResultLower}.

\section{Proof of Corollary \ref{MainResultUpperLower}}
\label{sec:MainResultUpperLower}

In light of Theorem \ref{MainResultUpper}, we need only prove the lower bound.
Let $h_i$ be a sequence in $H$ with $h_i \in \stabilizer_H(i)$ and $\| h_i \| \leq  \lambda^{-i}$.
Following the same arguments as in the proof of Theorem \ref{MainResultLower}, we have
$F_H(\lambda^{-i}) \geq 2^{d^i}$.
Thus,
$$
F_H(n) \succeq 2^{n^{\frac{1}{\log_d(1/\lambda)}}},
$$
as desired.

\section{The Gupta-Sidki and Pervova groups} \label{sec:perv}

In this section, we give explicit computations for the Gupta-Sidki $p$-groups and the Pervova group.
We begin by proving Theorem \ref{prop:gupta}, which gives bounds on the residual finiteness growths of the Gupta-Sidki $p$-groups, $G_p$. Loosely speaking, the idea behind this proof is to find deep elements that are canonically placed in $G_p$ with word lengths that do not depend on $p$.

\begin{proof}[Proof of Theorem \ref{prop:gupta}]
Let $G_p$ be the Gupta-Sidki $p$-group as defined in \S \ref{subsec:branch} and fix generators $x, y, y^x$. Let $\| \cdot \|$ be the metric norm with respect to this generating set.
Let $p \geq 5$.
Set $y_i = x^{i} y x^{-i}$.
Set 
$$
c := [y_{p-1}, y_0]=y_{p-1}^{-1} y_0^{-1} y_{p-1} y_0.
$$
For an element $g \in [G_p,G_p]$, we define 
$$
\| g \|^* := \min \left\{ \sum_{i} (2 \|a_i \| + 4): g = \prod_i a_i [y,x]^{\epsilon_i} a_i^{-1}, \epsilon_i = \pm 1\right\}.
$$

We claim that for every $k > 0$, there exists $g \in [G_p, G_p]$ such that $\| g \|^* \leq 9 \cdot 3^k$ and $1 \neq g \in \stabilizer_{G_p}(k)$.
We proceed by induction on $k$; for the base case one can select $g = c$.

For the inductive step, we use, as our inductive hypothesis, that there exists $g \in [G_p , G_p]$ such that $\| g \|^* \leq 9\cdot 3^k$ and $1 \neq g \in \stabilizer_{G_p}(k)$.
Then, by assumption,
$$
g = \prod_i a_i [y,x]^{\epsilon_i} a_{i}^{-1},
$$
where $\epsilon_i = \pm 1$.
Let $w_i$ be the words with $w_i(x,y) = a_i$.
Consider the element
$$
u = \prod_i w_i(y, y^x) c^{\epsilon_i} w_i(y,y^x)^{-1},
$$
where $\epsilon_i = \pm 1$.
It is straightforward to see that $c = ([y, x], 1, \ldots, 1)$ (c.f. page 387 of \cite{MR696534}).
Moreover, $y = (x, x^{-1}, 1, \ldots, 1, y)$ and $y^x = (y, x, x^{-1}, 1, \ldots, 1)$.
 Thus, we have
$$
u = ( g, 1, \ldots, 1).
$$
It follows that $u \in \stabilizer_{G_p}(k+1)$.
Further, writing $w_i := w_i(y,y^x)$ and $c = [x,y] y^{-1} [y,x] y$, we have
$$
u = \prod_i w_i ( [x,y]y^{-1} [y,x] y)^{\epsilon_i} w_i^{-1}.
$$
For each $i$, we have either $\epsilon_i = 1$ and
$$
w_i ( [x,y] y^{-1} [y,x] y)^{\epsilon_i} w_i^{-1} 
= 
w_i [x,y] w_i^{-1} w_i  y^{-1} [y,x] y w_i^{-1}
$$
or $\epsilon_i = -1$ and
$$
w_i ( [x,y] y^{-1} [y,x] y)^{\epsilon_i} w_i^{-1} 
= w_i ( y^{-1} [x,y] y [y,x] ) w_i^{-1}
= 
w_i y^{-1} [x,y] y w_i^{-1} w_i [y,x] w_i^{-1}.
$$

Thus, we have
$$
\| u \|_* \leq \sum_i (4\| w_i \| + 10).
$$
Since $y^x$ is part of our generating set, we get
$$
\sum_i (4\| w_i \| + 10) = \sum_i (4 \| a_i \| + 10),
$$
which is clearly less than or equal to $3 \| g \|^* = \sum_i (6\| a_i \| + 12)$.
Thus, $\| u \|^* \leq 3 \| g \|^* \leq 9 \cdot 3^{k+1}$, as desired.

Using the above claim, we get a sequence of points $\{ g_k \}_{k=1}^\infty$ in $G_p$ with $\| g_k \| \leq 9 \cdot 3^k$ and $g_k \in \stabilizer_{G_p}(k)$.
By the proof of Theorem \ref{MainResultLower}, we have
$$
F_{G_p} (n) \succeq 2^{n^{\frac{1}{\log_p(3)}}} = 2^{n^{\frac{\log(p)}{\log(3)}}}.
$$
Selecting $p$ such that $\log(p)/\log(3) > r$ finishes the proof.
\end{proof}

We finish the section with a proof of Proposition \ref{prop:perv}.
We first recall the definition of the group, and show that it is contracting.

Pervova \cite{MR2308183} has constructed the first examples of groups acting on rooted trees which fail to have the congruence subgroup property. Her examples include the following one. Consider a rooted $3$-regular tree $T_3$ and its  automorphisms $a$, $b$ and $c$ defined via recursions 
\begin{align*}
a=\sigma=(1,2,3),\: b=(a,a^{-1},b),\:  c=(c,a,a^{-1}).
\end{align*}
Let $G$ be the group generated by $a$, $b$ and $c$. 

\begin{lem}
The Pervova group $G$ is contracting.
\label{Contract-Pervova}
\end{lem}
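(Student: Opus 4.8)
The plan is to show that the Pervova group $G = \langle a, b, c \rangle$ is contracting by producing explicit constants $\lambda < 1$, $k_0$, and $C$ witnessing the inequality $\|g|_v\| < \lambda^k \|g\| + C$ for all vertices $v$ of level $k \geq k_0$. The cleanest route is to establish contraction at the first level and then bootstrap to all levels using the cocycle property $g|_{v_1 v_2} = g|_{v_1}|_{v_2}$ of restrictions. Concretely, I would first analyze how word length behaves under a single restriction $g \mapsto g|_x$ for $x \in \{1,2,3\}$, and argue that the total word length of the three first-level restrictions is controlled by a constant multiple of $\|g\|$ that is strictly better than tripling.

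First I would record the restriction data of the generators. From the recursions $a = \sigma$, $b = (a, a^{-1}, b)$, and $c = (c, a, a^{-1})$, every first-level restriction of a generator is again one of $a^{\pm 1}$, $b$, or $c$. The key observation is that $a$ restricts to trivial (or length-$1$) pieces, while $b$ and $c$ each produce two letters of the form $a^{\pm 1}$ plus one copy of themselves. The plan is to take a word $g = s_1 \cdots s_n$ of length $n = \|g\|$ in the generators, pass to the stabilizer of the first level by replacing $g$ with a bounded power (or by first intersecting with $\stabilizer_G(1)$ at the cost of an additive constant from the finite transversal), and then read off the three restrictions $g|_1, g|_2, g|_3$ using the cocycle formula $(s_1 \cdots s_n)|_x = s_1|_{x_1} \cdots s_n|_{x_n}$, where the intermediate vertices are determined by the permutations. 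Each letter $s_j$ contributes at most one generator-letter to each restriction, but crucially the length-increasing generators $b, c$ send their "heavy" copy to only one of the three coordinates, so summing over the three coordinates the total length $\|g|_1\| + \|g|_2\| + \|g|_3\|$ grows by a factor strictly less than $3$ relative to $\|g\|$, giving a per-level contraction factor $\lambda < 1$ once one compares against the branching degree $d = 3$.

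Having controlled one level, I would then iterate: for a vertex $v$ of level $k$, write $g|_v = g|_{v_1}|_{v_2 \cdots v_k}$ and apply the first-level estimate $k$ times, which multiplies the contraction factors and accumulates the additive constants into a geometric series that converges to a single constant $C$. This yields $\|g|_v\| < \lambda^k \|g\| + C$ for all $k \geq k_0$, which is exactly the definition of contracting given in the preliminaries. An alternative, if one prefers to avoid bookkeeping, is to invoke the equivalent characterization of contracting actions via the existence of a finite nucleus as in \cite{MR2162164}, and simply verify that the set of all restrictions of $a^{\pm 1}, b^{\pm 1}, c^{\pm 1}$ generates a finite set closed under further restriction.

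The main obstacle I anticipate is making the factor-strictly-less-than-$3$ bound rigorous rather than heuristic. The naive count shows each generator contributes at most one letter per coordinate, which only gives the trivial bound $\sum_x \|g|_x\| \leq 3\|g\|$ with $\lambda = 1$; the genuine contraction comes from the fact that long products force cancellation and that the permutation part $a$ does not lengthen its restrictions at all, so the heavy generators are "diluted" across levels. Pinning down the correct averaged estimate — either by tracking the proportion of letters that are $a$ versus $b, c$, or by passing to a carefully chosen power where the nucleus argument becomes transparent — is where the real work lies, and it is why the nucleus/finiteness characterization is likely the safer path to a clean proof.
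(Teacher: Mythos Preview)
Your main approach has a genuine gap: first-level restriction does not contract. Since $b|_3 = b$ and $c|_1 = c$, the generators $b$ and $c$ reappear as their own restrictions at level one, so for any word $g$ written in $b^{\pm 1}, c^{\pm 1}$ alone (for instance the commutator $[b,c]$, which is nontrivial) the letter-by-letter bound gives only $\max_x \|g|_x\| \leq \|g\|$ with no strict inequality and no additive saving. Moreover, your proposed remedy of bounding the \emph{sum} $\|g|_1\| + \|g|_2\| + \|g|_3\|$ by something strictly less than $3\|g\|$ targets the wrong quantity: contraction requires a bound on $\max_x \|g|_x\|$, and a sum bound of the form $\alpha\|g\|$ with $\alpha < 3$ is perfectly compatible with one coordinate carrying the full length $\|g\|$ while the other two are short. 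The ``dilution'' heuristic you describe does not close this.

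The paper resolves the issue by passing to the \emph{second} level rather than the first. One lists all reduced words of length two in $a^{\pm 1}, b^{\pm 1}, c^{\pm 1}$ and checks directly that every second-level restriction of such a word has length at most $1$; writing an arbitrary $g$ as $g_1 g_2$ with $\|g_1\| = 2$ and inducting on length then gives $\|g|_v\| \leq 1 + \tfrac{1}{2}\|g\|$ for every $v$ of level two. Iterating in two-level blocks yields contraction with $\lambda \leq 1/\sqrt{2}$. Your nucleus alternative would also work and is a legitimate route, but the approach you emphasize --- first-level bookkeeping plus a sum estimate --- cannot succeed as stated.
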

\begin{proof}
We use a standard argument to show that the contraction coefficient of $G$ is strictly less than $1$.
 
All words of length two up to taking the inverse are $ab$, $a^{-1}b$, $ab^{-1}$, $a^{-1}b^{-1}$, $ac$, $a^{-1}c$, $ac^{-1}$, $a^{-1}c^{-1}$, $bc$, $bc^{-1}$, $b^{-1}c$, $b^{-1}c^{-1}$.

For these words
\begin{align*}
&a^{\pm 1}b=\sigma^{\pm 1}(a,a^{-1},b)\\
&a^{\pm 1}b^{-1}=\sigma^{\pm 1}(a^{-1},a,b^{-1}) \\
&a^{\pm 1}c=\sigma^{\pm 1}(c,a,a^{-1})\\
&a^{\pm 1}c^{-1}=\sigma^{\pm 1}(c^{-1}, a^{-1}, a)\\
&bc=(ac, 1, ba^{-1})=(\sigma(c,a,a^{-1}), 1, (a,a^{-1},b)\sigma^{-1} )\\
&bc^{-1}=(ac^{-1}, a, ba)=(\sigma(c^{-1},a^{-1},a), a, (a,a^{-1},b)\sigma)\\
&b^{-1}c=(a^{-1}c, a^{-1}, b^{-1}a^{-1})=(\sigma^{-1}(c,a,a^{-1}), a^{-1}, (a^{-1}, a, b^{-1})\sigma^{-1})\\
&b^{-1}c^{-1}=(a^{-1}c^{-1}, 1, b^{-1}a)=(\sigma^{-1}(c^{-1}, a^{-1},a), 1, (a^{-1},a,b^{-1})\sigma)
\end{align*}
and we have reduction of the length on the second level by $\frac{1}{2}$. 

Suppose by induction that for every word $g$ of length $l$ with $1\leq l\leq n-1$ we have $$||g|_v||\leq 1+\frac{||g||}{2}$$ for every vertex $v$ of the second level of the tree. Let $g$ be a word of length $n$ in $G$. Then $g$ can be written as a product $g=g_1 \cdot g_2$ where the length of $g_1$ is $2$ and the length of $g_2$ is $n-2$. For every vertex $v$ of level $2$ in the tree we calculate 
\begin{equation*} ||g|_v||=||(g_1\cdot g_2)|_v||=||g_1|_{g_2(v)} \cdot g_2|_v||\leq ||g_1|_{g_2(v)}||+||g_2|_v||\leq 1+\frac{n}{2}.
\end{equation*}

Suppose $v$ is a vertex of level $k$ with $k\geq 2$, then $v$ can be written as a product $v_0 v_1\dots v_m$ where $m=\lfloor \frac{k}{2}\rfloor$ and $|v_i|=2$ for $1\leq i\leq m$ and $|v_0|<2$. 

For every $g\in G$

\begin{equation*}
||g|_v||=||g|_{v_0 v_1\dots v_m}||<1+\frac{||g|_{v_0 v_1\dots v_m}||}{2}<1+\frac{1}{2}(1+\frac{1}{2}(\dots +(1+\frac{1}{2}||g|_{v_0}||)))< 2+\frac{||g|_{v_0}||}{2^m}\leq 2+\frac{||g||}{2^m}.
\end{equation*}

Thus $G$ is contracting with the contraction coefficient $\lambda\leq \frac{1}{\sqrt{2}}$.
\end{proof}

\begin{proof}[Proof of Proposition \ref{prop:perv}.]
Since the Pervova group contains as a subgroup the Gupta-Sidki 3-group, which is known to be a just-infinite regular branch contracting group with the congruence subgroup property (see references in Example \ref{ManyExamples}), we conclude by applying Theorems \ref{MainResultUpper} that $F_G (n)~\succeq~ 2^{n^{\frac{1}{\log_3(\delta)}}}$.

The group $G$ is regular branch over the subgroup $[G,G]$, see \cite{MR2891709}.  We showed in Lemma~\ref{Contract-Pervova} that the group $G$ is contracting thus $F_G\preceq 2^{n^{\frac{1}{\log_3(1/\lambda)}}}$ by Theorem \ref{MainResultUpper}. We conclude that the Pervova group has near exponential $\Phi_H$ growth as desired.

\end{proof}




\bibliography{refs}

\def\cprime{$'$} \def\cprime{$'$} \def\cprime{$'$} \def\cprime{$'$}
  \def\cprime{$'$} \def\cprime{$'$}
\begin{thebibliography}{BRK12b}

\bibitem[Abe06]{MR2273978}
M.~Abert.
\newblock Representing graphs by the non-commuting relation.
\newblock {\em Publ. Math. Debrecen}, 69(3):261--269, 2006.

\bibitem[BG02]{MR1899368}
L.~Bartholdi and R.~I. Grigorchuk.
\newblock On parabolic subgroups and {H}ecke algebras of some fractal groups.
\newblock {\em Serdica Math. J.}, 28(1):47--90, 2002.

\bibitem[BR10]{B09}
K.~Bou-Rabee.
\newblock Quantifying residual finiteness.
\newblock {\em J. Algebra}, 323(3):729--737, 2010.

\bibitem[BRK12a]{MR2925403}
K.~Bou-Rabee and T.~Kaletha.
\newblock Quantifying residual finiteness of arithmetic groups.
\newblock {\em Compos. Math.}, 148(3):907--920, 2012.

\bibitem[BRK12b]{BK12}
K.~Bou-Rabee and T.~Kaletha.
\newblock Quantifying residual finiteness of arithmetic groups.
\newblock {\em Compos. Math.}, 148(3):907--920, 2012.

\bibitem[BRM]{BM13}
K.~Bou-Rabee and D.~B. McReynolds.
\newblock Extremal behavior of divisibility functions.
\newblock {\em to appear in Geometriae Dedicata}.

\bibitem[BRSa]{BS14}
K.~Bou-Rabee and D.~Studenmund.
\newblock Full residual finiteness growth of nilpotent groups.
\newblock Submitted, arXiv:math.GR/1406.3763.

\bibitem[BRSb]{BS13b}
Khalid Bou-Rabee and Brandon Seward.
\newblock Arbitrarily large residual finiteness growth.
\newblock {\em Journal f\"ur die reine und angewandte Mathematik}, (to appear).
\newblock arXiv:math.GR/1304.1782.

\bibitem[BSZ12]{MR2891709}
L.~Bartholdi, O.~Siegenthaler, and P.~Zalesskii.
\newblock The congruence subgroup problem for branch groups.
\newblock {\em Israel J. Math.}, 187:419--450, 2012.

\bibitem[Gar]{Garrido2014}
A.~Garrido.
\newblock Abstract commensurability and the {G}upta--{S}idki group.
\newblock http://arxiv.org/abs/1310.0493.

\bibitem[Gri80]{MR565099}
R.~I. Grigor{\v{c}}uk.
\newblock On {B}urnside's problem on periodic groups.
\newblock {\em Funktsional. Anal. i Prilozhen.}, 14(1):53--54, 1980.

\bibitem[Gri84]{MR764305}
R.~I. Grigorchuk.
\newblock Degrees of growth of finitely generated groups and the theory of
  invariant means.
\newblock {\em Izv. Akad. Nauk SSSR Ser. Mat.}, 48(5):939--985, 1984.

\bibitem[Gri00]{MR1765119}
R.~I. Grigorchuk.
\newblock Just infinite branch groups.
\newblock In {\em New horizons in pro-{$p$} groups}, volume 184 of {\em Progr.
  Math.}, pages 121--179. Birkh\"auser Boston, Boston, MA, 2000.

\bibitem[Gri05]{MR2195454}
R.~Grigorchuk.
\newblock Solved and unsolved problems around one group.
\newblock In {\em Infinite groups: geometric, combinatorial and dynamical
  aspects}, volume 248 of {\em Progr. Math.}, pages 117--218. Birkh\"auser,
  Basel, 2005.

\bibitem[GS83a]{MR696534}
N.~Gupta and S.~Sidki.
\newblock On the {B}urnside problem for periodic groups.
\newblock {\em Math. Z.}, 182(3):385--388, 1983.

\bibitem[GS83b]{MR759409}
N.~Gupta and S.~Sidki.
\newblock Some infinite {$p$}-groups.
\newblock {\em Algebra i Logika}, 22(5):584--589, 1983.

\bibitem[GS84]{MR767112}
N.~Gupta and S.~Sidki.
\newblock Extension of groups by tree automorphisms.
\newblock In {\em Contributions to group theory}, volume~33 of {\em Contemp.
  Math.}, pages 232--246. Amer. Math. Soc., Providence, RI, 1984.

\bibitem[Nek05]{MR2162164}
V.~Nekrashevych.
\newblock {\em Self-similar groups}, volume 117 of {\em Mathematical Surveys
  and Monographs}.
\newblock American Mathematical Society, Providence, RI, 2005.

\bibitem[Per07]{MR2308183}
E.~Pervova.
\newblock Profinite completions of some groups acting on trees.
\newblock {\em J. Algebra}, 310(2):858--879, 2007.

\bibitem[Tho15]{Th15}
A.~Thom.
\newblock About the length of laws for finite groups.
\newblock {\em submitted to the arXiv}, 2015.

\bibitem[Wil71]{MR0274575}
J.~S. Wilson.
\newblock Groups with every proper quotient finite.
\newblock {\em Proc. Cambridge Philos. Soc.}, 69:373--391, 1971.

\end{thebibliography}
\bibliographystyle{alpha}

\end{document}